\begin{document}
\begin{frontmatter}
\title{VC dimension of ellipsoids}
\author[ya]{Yohji Akama\corref{cor1}}
\ead{akama@m.tohoku.ac.jp}
\address[ya]
{Mathematical Institute, Tohoku University,
 Aoba-ku, Sendai, Miyagi, 980-8578, Japan\\
+81-(0)22-795-7708(tel) +81-(0)22-795-6400(fax)}
\cortext[cor1]{Corresponding author}

\author[ki]{Kei Irie}
\address[ki]{
Department of Mathematics, Kyoto University, Kyoto, 606, Japan}
\ead{iriek@math.kyoto-u.ac.jp}
\begin{abstract}
We will establish that the \textsc{vc} dimension of the class of
$d$-dimensional ellipsoids is $(d^2+3d)/2$, and that maximum likelihood
estimate with $N$-component $d$-dimensional Gaussian mixture models
induces a geometric class having \textsc{vc} dimension at least
$N(d^2+3d)/2$.
\end{abstract}
\begin{keyword} VC dimension; finite dimensional ellipsoid; Gaussian
 mixture model\end{keyword}

\end{frontmatter}
\def\evec{e} 
\def\GMM#1#2{({\mathcal G}_{#2})_{#1}}
\def\transpose#1{{}^t{#1}}
\def\Rset{\mathbb R}
\def\Sset{\mathbb S} 
\def\C{{\mathcal C}}
\def\G{{\mathcal G}_d}
\def\c#1{{\mathcal D}\left(#1\right)}
\def\P{{\mathcal P}}
\def\VC#1{\mathrm{VCdim}(#1)}
\def\diam{\mathrm{diam}\;}
\def\norm#1{\|#1\|}	
\def\conv#1{\mathrm{conv}(#1)}

\newcommand{\midc}{\;;\;}

\newtheorem{theorem}{Theorem}
\newtheorem{corollary}[theorem]{Corollary}
\newtheorem{lemma}[theorem]{Lemma}
\newproof{proof}{Proof}
\def\origin{\vec{0}}

\section{Introduction}\label{sec:intro}

For sets $X \subseteq \Rset ^d$ and $Y \subseteq X$, 
we say that a set $B \subseteq \Rset ^d$ 
\emph{cuts $Y$ out of} $X$ if 
$Y = X \cap B$. 
A class~$\mathcal C$ of subsets of $\Rset ^d$ is said to 
\emph{shatter} a set $X \subseteq \Rset ^d$ if 
every $Y \subseteq X$ is cut out of $X$ by some $B \in \mathcal C$. 
The \emph{\textsc{vc} dimension} of $\mathcal C$, denoted by $\VC{\mathcal C}$, 
is defined to be the maximum $n$ 
(or $\infty$ if no such maximum exists) for which
some subset of $\Rset ^d$ of cardinality~$n$ is shattered by $\C$. 

The \textsc{vc} dimension of a class describes a complexity of the
class, and
are employed in empirical process
theory~\cite{MR1720712}, statistical and computational learning
theory~\cite{MR1641250,MR1072253} and discrete
geometry~\cite{MR1899299}. Although 
asymptotic estimates of \textsc{vc} dimensions are given
for many classes, 
the exact values of \textsc{vc} dimensions are known for only a few
classes~(e.g. the
class of Euclidean balls~\cite{MR602047}, the class of halfspaces~\cite{MR1899299}, and so on).

In Section~\ref{sec:ellipsoids},  we prove :
\begin{theorem}\label{main} The class of $d$-dimensional ellipsoids has
 \textsc{vc} dimension
$(d^2+3d)/2$.
\end{theorem}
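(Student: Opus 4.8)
My plan is to linearize, realizing ellipsoids as sublevel sets of quadratics and reducing everything to the position of one open convex cone relative to the coordinate orthants of a lifted space. Write $\mathcal E$ for the class of $d$-dimensional ellipsoids and a member as $E_f=\{x\in\Rset^d : f(x)\le 0\}$, where $f(x)=\transpose{x}Ax+\transpose{b}x+e$ ranges over the vector space $V$ of real quadratics, $\dim V=(d^2+3d)/2+1=D+1$ with $D:=(d^2+3d)/2$, subject only to the symmetric matrix $A$ being positive definite. Lifting $x$ to $\phi(x)=(x\transpose{x},x)$ in $W=\Sset_d\oplus\Rset^d\cong\Rset^D$ (with $\Sset_d$ the symmetric $d\times d$ matrices), the condition $f(x)\le 0$ becomes affine in $\phi(x)$; thus each ellipsoid is the $\phi$-preimage of a halfspace, the sole constraint being that the pair $(A,b)$ lies in the open convex cone $C=\{f\in V : A\succ 0\}$.

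For the upper bound I suppose $D+1$ points $p_1,\dots,p_{D+1}$ are shattered. After shrinking or enlarging each cutting ellipsoid slightly I may assume every dichotomy is realized with strict signs, so for each dichotomy there is an $f\in C$ with the prescribed strict signs of $(f(p_i))_i$. Consider the evaluation map $\mathrm{ev}\colon V\to\Rset^{D+1}$, $f\mapsto(f(p_i))_i$, and the image cone $K=\mathrm{ev}(C)$. The crucial use of positive definiteness is that $K$ is confined to a halfspace: fixing any $\Theta\succ 0$, the functional $f\mapsto \mathrm{tr}(\Theta A)$ is strictly positive on $C$, whence $K\subseteq\{\psi>0\}$ for a nonzero $\psi\in(\Rset^{D+1})^{*}$ when $\mathrm{ev}$ is onto, while $K$ lies in a hyperplane $\{\psi=0\}$ when it is not. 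In either case $K\subseteq\{\psi\ge 0\}$ with $\psi\ne 0$, so $K$ misses the open orthant on which $\psi<0$; the single dichotomy $\{i:\psi_i>0\}$ indexing that orthant is then unrealizable, contradicting shattering. Hence $\VC{\mathcal E}\le D$. This is exactly what separates ellipsoids from arbitrary quadrics, whose sublevel sets genuinely do shatter $D+1$ points.

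For the lower bound I place the points on the unit sphere and perturb the unit ball. For $p$ with $\transpose{p}p=1$ and the ellipsoid $\{\transpose{(x-\epsilon g)}(I+\epsilon H)(x-\epsilon g)\le 1\}$, $H\in\Sset_d$, $g\in\Rset^d$, a first-order expansion shows $p$ lies inside iff $\epsilon(\langle H,p\transpose{p}\rangle-2\transpose{g}p)+O(\epsilon^2)\le 0$, whose sign for small $\epsilon>0$ is that of $\langle(H,-2g),\eta(p)\rangle$ with feature vector $\eta(p)=(p\transpose{p},p)\in\Rset^D$. A parity argument, splitting a relation $\transpose{p}Hp+\transpose{g}p\equiv 0$ on the sphere into even and odd parts, shows the $\eta(p)$ span $\Rset^D$, so I may pick $p_1,\dots,p_D$ with $\eta(p_1),\dots,\eta(p_D)$ linearly independent. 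For any $S\subseteq\{1,\dots,D\}$ I then solve the linear system $\langle(H,-2g),\eta(p_i)\rangle=\mp 1$ (negative for $i\in S$), and take $\epsilon>0$ small enough that $I+\epsilon H\succ 0$ and the remainder flips none of the finitely many strict signs; the resulting genuine ellipsoid cuts out exactly $S$. Thus these $D$ points are shattered and $\VC{\mathcal E}\ge D$.

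I expect the upper bound to be the main obstacle. The naive lift only gives the bound $D+1=\dim V$, and the entire content is to extract the extra unit from positive definiteness; the halfspace-and-orthant argument above is my proposed way to do this cleanly, the delicate points being the reduction to strict realizations and the case split on whether $\mathrm{ev}$ is surjective.
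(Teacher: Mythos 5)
Your proposal is correct, and while your lower bound is essentially the paper's argument, your upper bound takes a genuinely different route. For the lower bound, both proofs place points on the unit sphere whose quadratic--linear feature vectors are in general position and realize each dichotomy by a small perturbation of the unit ball: the paper packages the perturbation step as Lemma~\ref{lem:abc} (shattering by halfspaces $H_{b,-1}$ with $b$ near the normal of the hyperplane spanned by $\varphi(\Sset^{d-1})$), while you solve the linear system $\langle(H,-2g),\eta(p_i)\rangle=\pm1$ and do a first-order expansion; your parity argument is exactly the paper's observation that the lifted sphere spans a hyperplane. For the upper bound, the paper linearizes as you do but then proves a standalone combinatorial fact via Radon's theorem: halfspaces in $\Rset^B$ whose normals all have positive last coordinate shatter no set of $B+1$ points (Lemma~\ref{lem:hab}), whence shattering by $\{H_{a,c}\}_{a\in A,\,c\in\Rset}$ of $B+1$ points forces $\origin\in\conv{A}$ (Corollary~\ref{cor:hab}), contradicting the fact that the positive-definite cone is convex and misses the origin. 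You instead stay entirely in parameter space: after the (correct) reduction to strict signs, shattering means the image of the open cone $C$ under the evaluation map $f\mapsto(f(p_i))_{i=1}^{D+1}$ meets all $2^{D+1}$ open orthants; but since $f\mapsto\mathrm{tr}(\Theta A)$ is strictly positive on $C$, that image lies in a closed halfspace $\{\psi\ge 0\}$ with $\psi\ne 0$ (via $\mathrm{ev}^{-1}$ when $\mathrm{ev}$ is bijective, via a functional annihilating the image when it is not), and such a halfspace always misses the open orthant whose signs oppose those of $\psi$ --- a contradiction. Both proofs extract the crucial extra unit from the same source, namely that positive definiteness confines the parameters to a convex cone avoiding the origin and supporting a strictly positive linear functional; what your version buys is brevity and self-containedness (no Radon's theorem, no projection step), while the paper's version yields reusable intermediate results about halfspace classes with constrained normals, valid for any index set $A$ with $\origin\notin\conv{A}$. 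The only cosmetic discrepancy is that you work with closed ellipsoids where the paper's are open, but your strict-sign perturbations make the two conventions interchangeable.
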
 
Here, by a \emph{$d$-dimensional ellipsoid}, we mean an open set $\{
x\in\Rset^d \midc \transpose{(x-\mu)}A (x-\mu)< 1 \}$ where
$\mu\in\Rset^d$ and $A\in \Rset^{d\times d}$ is positive definite.

In  Section~\ref{sec:gmm}, we use a part of Theorem~\ref{main} (Lemma~\ref{thm:lowerbound}) to study
statistical models. In statistics and statistical learning theory, the class of
$d$-dimensional ellipsoids is induced from the class $\G$ of $d$-dimensional
\emph{Gaussian distributions}:  A $d$-dimensional
Gaussian distribution with mean $\mu\in\Rset^d$ and covariance matrix
$\Sigma\in\Rset^{d\times d}$ is, by definition, a probability density
function 
\begin{align*}
(2\pi)^{-d/2}
 |\det\Sigma|^{-1/2}\exp\left(-\transpose{(x-\mu)}\Sigma^{-1}(x-\mu)/2  \right),\quad(x\in\Rset^d)
\end{align*} 
where a \emph{covariance matrix} of size $d$ is, by definition, a real,
 positive definite matrix.  As in statistical learning
 theory~\cite{MR1641250}, for a class $\P$ of probability density
 functions we consider the class $\c{\P}$ of sets
$\{ x\in\Rset^d \midc  f(x) >s\}$
such that $f$ is any probability density function in $\P$ and $s$ is any
positive real number. Then $\c{\G}$ is the class of $d$-dimensional ellipsoids.

 For a positive integer $N$, 
an \emph{$N$-component
$d$-dimensional Gaussian mixture model}~\cite{MR838090}~(
 $(N,d)$-\textsc{gmm} ) is, by definition, any probability distribution
 belonging to the
convex hull of some $N$ $d$-dimensional Gaussian distributions. 
Suppose we are given  a sample from a population
$(N,d)$-\textsc{gmm} but 
the number $N$ of the components is unknown.
 To select $N$ from the sample is an example of Akaike's model selection
problem~\cite{MR0483125}~(see \cite{MR2319879} for recent approach).
The authors of \cite{wang05:_learn_gauss_mixtur_model_struc_risk_minim}
proposed to choose $N$ by \emph{structural risk minimization
principle}~\cite{MR1641250}, where an important role is played by the
\textsc{vc} dimension of the class $\c{(\G)_N}$ with $(\G)_N$ being the
class of $(N,d)$-\textsc{gmm}s.  Our result is that the \textsc{vc}
dimension of $\c{(\G)_N}$ is greater than or equal to ${N(d^2+3d)}/2.$

\section{VC dimension of ellipsoids \label{sec:ellipsoids}}

We will prove Theorem~\ref{main}.  For a positive integer
$B$, a vector $a\in \Rset^B\setminus\{ \vec{0}\}$, and $c\in\Rset$, we
write an affine function $\ell_{a,c}(x):= \transpose{a} x + c\
(x\in\Rset^B)$ and an open halfspace $H_{a,c}:=\{ x\in \Rset^B \midc
\ell_{a,c}(x)< 0\}$.  We say a set $W\subseteq \Rset^B$ \emph{spans} an
affine subspace $H\subseteq\Rset^B$, if $H$ is the smallest affine
subspace that contains $W$. The cardinality of a set $S$ is denoted by
$|S|$. For a vector $a=\transpose{(a_1,\ldots, a_B)}\in \Rset^B$, let
$\norm{a}_\infty$ be  $\max\{\,|a_i| \midc 1\le i\le B\}$.

\begin{lemma}\label{lem:abc}For any $a\in \Rset^B\setminus\{\vec{0}\}$
 and any
 $S\subset \Rset^B$ with $|S|=B$, if $S$
spans a hyperplane $\{x\in\Rset^B \midc \ell_{a,-1}(x) = 0\}$ , then $S$
 is shattered by a class  $\{ H_{b,-1} \midc   b\in \Rset^B\setminus\{\vec0\},\ \norm{b - a}_\infty <\varepsilon  \}$ for any $\varepsilon>0$.
\end{lemma}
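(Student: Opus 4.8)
The plan is to reduce shattering to solving a full-rank linear system and then rescaling. Write $S=\{s_1,\ldots,s_B\}$. Since $S$ spans the $(B-1)$-dimensional hyperplane $\{x\in\Rset^B\midc \ell_{a,-1}(x)=0\}$ and $|S|=B$, the points are necessarily affinely independent, and each satisfies $\transpose{a}s_i=1$. The first step I would record is that affine independence together with the common value $\transpose{a}s_i=1$ forces $s_1,\ldots,s_B$ to be linearly independent as vectors: if $\sum_i c_i s_i=\origin$, then applying $\transpose{a}$ gives $\sum_i c_i=0$, so $\sum_{i\ge 2}c_i(s_i-s_1)=\origin$, and affine independence kills every coefficient. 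Hence the matrix $M$ whose $i$-th row is $\transpose{s_i}$ is invertible.

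Next, I would fix an arbitrary $Y\subseteq S$ and put $b=a+\delta$. Because $\transpose{a}s_i=1$, one computes $\ell_{b,-1}(s_i)=\transpose{b}s_i-1=\transpose{\delta}s_i$, so $s_i\in H_{b,-1}$ exactly when $\transpose{\delta}s_i<0$. Thus cutting $Y$ out of $S$ amounts to choosing $\delta$ for which $M\delta$ is negative in the coordinates indexed by $Y$ and positive in the remaining ones. I would take the sign vector $v\in\Rset^B$ with $v_i=-1$ when $s_i\in Y$ and $v_i=+1$ otherwise, and set $\delta=tM^{-1}v$ for a parameter $t>0$. Then $M\delta=tv$ realizes exactly the prescribed strict signs, so $H_{b,-1}\cap S=Y$ with no point of $S$ on the boundary of $H_{b,-1}$.

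Finally, the $\varepsilon$-constraint is met by taking $t$ small. Since $\norm{\delta}_\infty=t\,\norm{M^{-1}v}_\infty\to 0$ as $t\to 0^+$, while the sign pattern of $M\delta=tv$ is independent of $t>0$, I would choose $t$ small enough that $\norm{\delta}_\infty<\min(\varepsilon,\norm{a}_\infty)$. This gives $\norm{b-a}_\infty<\varepsilon$, and also $b\neq\origin$, since $b=\origin$ would force $\norm{b-a}_\infty=\norm{a}_\infty$. As $Y$ was arbitrary, the class $\{H_{b,-1}\midc b\in\Rset^B\setminus\{\vec0\},\ \norm{b-a}_\infty<\varepsilon\}$ shatters $S$.

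I expect the only genuine content to be the linear-independence step; once $M$ is invertible, the sign prescription and the rescaling are routine. The point to watch is the strictness of the inequalities: they must be arranged so that the \emph{open} halfspace $H_{b,-1}$ excludes precisely the points of $S\setminus Y$, with none of them lying on its bounding hyperplane.
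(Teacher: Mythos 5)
Your proof is correct and takes essentially the same route as the paper: the paper reduces by an affine transformation to the normalized case $a=(1,\ldots,1)$ and $S=\{e_1,\ldots,e_B\}$, where $e_i\in H_{b,-1}$ if and only if $b_i<1$, and your explicit inversion of the matrix $M$ (with the sign prescription $M\delta=tv$ and rescaling by $t$) is exactly that normalization carried out in the original coordinates. Both arguments rest on the same fact, namely that $b\mapsto(\ell_{b,-1}(s_i))_{i=1}^B$ is an affine isomorphism of $\Rset^B$, so every strict sign pattern is realized by some $b\neq\origin$ arbitrarily close to $a$.
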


\begin{proof}By an affine transformation we can assume without loss of
 generality that all the components of the vector $a$ are 1 and that $S$ is the
 canonical basis $\{ \evec_1,\ldots,
\evec_B\}$ of $\Rset^B$. Suppose 
 $\norm{b-a}_\infty$ is less than $\varepsilon>0$. By
 $b\ne\vec0$, we have  $H_{b,-1}\ne\Rset^B$.
Then the vector $e_i$ belongs to the open halfspace $H_{b,-1}$ if and only if the $i$-th component of $b$ is
 less than $1$.\qed
\end{proof} 

\begin{lemma} \label{thm:lowerbound}The class of $d$-dimensional
 ellipsoids has \textsc{vc} dimension greater than or equal to 
$ (d^2+3d)/2$.\end{lemma}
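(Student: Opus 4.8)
The plan is to linearize the quadratic inequality defining an ellipsoid by a degree-two lift, so that ellipsoids become a subfamily of halfspaces and Lemma~\ref{lem:abc} applies directly. Concretely, set $B=(d^2+3d)/2$ and consider the map $\phi\colon\Rset^d\to\Rset^B$ sending $x=\transpose{(x_1,\ldots,x_d)}$ to the vector of all monomials of degree $1$ and $2$: the $d$ squares $x_i^2$, the $\binom{d}{2}$ products $x_ix_j$ with $i<j$, and the $d$ coordinates $x_i$. Since $d+\binom{d}{2}+d=B$, the target is indeed $\Rset^B$. Every ellipsoid inequality $\transpose{(x-\mu)}A(x-\mu)<1$ expands to a quadratic $\transpose{x}Qx+\transpose{v}x+w<0$, hence to an affine inequality $\transpose{b}\phi(x)+w<0$ in the lifted variable, where the entries of $b$ simply record the diagonal and off-diagonal entries of $Q$ together with those of $v$.

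Conversely, I would show that pulling back a halfspace is almost always an ellipsoid. For $b\in\Rset^B$ let $Q_b$ be the symmetric matrix whose diagonal is the square-coordinates of $b$ and whose off-diagonal entries are half the product-coordinates of $b$, and let $v_b$ collect the linear coordinates. Then $\{x\midc \transpose{b}\phi(x)-1<0\}=\{x\midc \transpose{x}Q_bx+\transpose{v_b}x-1<0\}$, and completing the square shows this is a genuine $d$-dimensional ellipsoid whenever $Q_b$ is positive definite: the minimum value $-1-\tfrac{1}{4}\transpose{v_b}Q_b^{-1}v_b$ is negative, so the sublevel set is a nonempty, bounded ellipsoid. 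The key point for later is that positive definiteness of $Q_b$ is an \emph{open} condition depending continuously on $b$.

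The heart of the argument is to produce $B$ points in $\Rset^d$ whose $\phi$-images span a hyperplane $\{\ell_{a,-1}=0\}$ with $a$ corresponding to a positive definite $Q_a$. I would take $a$ with all square-coordinates equal to $1$ and all others $0$, so $Q_a$ is the identity and $\ell_{a,-1}(\phi(x))=\norm{x}^2-1$; the hyperplane $\{\ell_{a,-1}=0\}$ then pulls back to the unit sphere $\Sset^{d-1}$. It suffices to find $B$ points on $\Sset^{d-1}$ with affinely independent images, i.e.\ to show $\phi(\Sset^{d-1})$ affinely spans all of $\{\ell_{a,-1}=0\}$. For this I would prove the elementary fact that the only degree-$\le 2$ polynomials vanishing on $\Sset^{d-1}$ are scalar multiples of $\norm{x}^2-1$: writing such a polynomial as $\transpose{x}Qx+\transpose{v}x+w$ and evaluating at $\pm u$ for unit vectors $u$ yields $\transpose{u}Qu=-w$ and $\transpose{v}u=0$ for all unit $u$, forcing $v=0$ and $Q=-wI$. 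Hence the only hyperplane containing $\phi(\Sset^{d-1})$ is $\{\ell_{a,-1}=0\}$ itself, its affine span is exactly that hyperplane, and one can select $B$ sphere points whose images form an affine basis of it.

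Finally I would assemble the pieces. Applying Lemma~\ref{lem:abc} to these $B$ points with the chosen $a$, they are shattered by the halfspaces $H_{b,-1}$ with $\norm{b-a}_\infty<\varepsilon$. Since $Q_a$ is the identity and positive definiteness is open, shrinking $\varepsilon$ keeps every $Q_b$ positive definite, so each such $H_{b,-1}$ pulls back under $\phi$ to a genuine ellipsoid $E_b$ with $p\in E_b\iff\phi(p)\in H_{b,-1}$; the shattering of the image points therefore transfers to shattering of the $B$ original sphere points by ellipsoids, giving the lower bound $(d^2+3d)/2$. I expect the main obstacle to be the span/ideal step of the third paragraph, namely guaranteeing that the lifted sphere points fill out a \emph{full} hyperplane whose normal corresponds to a positive definite form, rather than a smaller flat; everything else is routine linearization together with an open-condition perturbation.
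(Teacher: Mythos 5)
Your proposal is correct and follows essentially the same route as the paper's proof: the degree-two monomial lift of the unit sphere $\Sset^{d-1}$ into $\Rset^B$, a choice of $B$ sphere points whose images affinely span the hyperplane $\xi_1+\cdots+\xi_d-1=0$, an application of Lemma~\ref{lem:abc} with $a$ the indicator of the square coordinates, and a perturbation argument showing the nearby halfspaces pull back to ellipsoids. The only difference is that you spell out two steps the paper asserts without detail (that $\varphi(\Sset^{d-1})$ spans the full hyperplane, via the classification of quadratics vanishing on the sphere, and that positive definiteness is open), which are exactly the right justifications.
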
 
\begin{proof}Let $B$ be the right-hand side.
Let $\varphi$ be a map $\Sset^{d-1} \to \Rset^B$ which maps $x=\transpose{(x_1,\ldots,x_d)}$ to $\transpose{(x_1^2,\ldots,x_d^2, x_1x_2,\ldots,x_{d-1}x_d,x_1,\ldots,x_d)}$.
Let $\transpose{(\xi_1,\ldots, \xi_B)}$ be a coordinate of
 $\Rset^B$. Then the
 image $\varphi\left(\Sset^{d-1}\right)$ spans a hyperplane
 $\xi_1+\cdots+\xi_d-1 =0$. So there is  some set
 $S\subset \Sset^{d-1}$ such that $|S|=B$ and $\varphi(S)$ spans the hyperplane. 
Let $a\in\Rset^B$ be a vector with the first $d$ components being 1 and the other
 components being 0.  By Lemma~\ref{lem:abc},
 for any  $\varepsilon>0$ the family 
$\left\{H_{b,-1} \midc   b\in\Rset^B\setminus\{\vec0\},\ \norm{b-a}_\infty < \varepsilon \right\}$
shatters $\varphi(S)$. By the definition of $\varphi$, the class of
 sets defined
 by quadratic inequalities
 \begin{align*}
  b_1 x_1^2 + \cdots + b_d x_d^2 + b_{d+1} x_1 x_2 + \cdots+ b_B x_d -1
< 0 \quad (\ \norm{b - a}_\infty <\varepsilon \ )
\end{align*}
shatters $S$. But, when $\varepsilon$ is sufficiently small, all of these sets
 are ellipsoids.\qed
\end{proof}

We verify the converse inequality.

\begin{lemma}\label{lem:hab} $\mathrm{VC}\bigl( \{H_{a,c}
 \ ;\ a=\transpose{(a_1,\ldots,a_B)}\in \Rset^B,
a_B>0,c\in\Rset\}\bigr)\le B$  for any positive integer $B$.\end{lemma}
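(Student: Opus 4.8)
The plan is to exploit the sign constraint $a_B>0$ in order to cut the effective number of parameters from $B+1$ down to $B$, and then to run a linear-algebra argument followed by a sign count. Since scaling $(a,c)$ by a positive constant leaves $H_{a,c}$ unchanged, the hypothesis $a_B>0$ lets me normalize $a_B=1$. Writing $x=(x_1,\dots,x_B)$ and $a=(a_1,\dots,a_{B-1},1)$, the condition $x\in H_{a,c}$ becomes the strict inequality $a_1x_1+\cdots+a_{B-1}x_{B-1}+c<-x_B$; that is, the class is exactly the family of open subgraphs of affine functions of $(x_1,\dots,x_{B-1})$, governed by the $B$ real parameters $(a_1,\dots,a_{B-1},c)$. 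This reduction is the heart of the matter: the unrestricted halfspace class in $\Rset^B$ already has \textsc{vc} dimension $B+1$, so the sharper bound $B$ can only be obtained by genuinely using $a_B>0$.

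Next I would argue by contradiction, assuming that some set $\{p_0,\dots,p_B\}$ of $B+1$ points is shattered. To each $p_i=(p_{i,1},\dots,p_{i,B})$ I attach the augmented vector $u_i=(p_{i,1},\dots,p_{i,B-1},1)\in\Rset^B$ and the scalar $\tau_i=-p_{i,B}$, so that, with $\theta=(a_1,\dots,a_{B-1},c)$, the point $p_i$ lies in the halfspace exactly when the slack $s_i:=\tau_i-\langle\theta,u_i\rangle$ is positive. The $B+1$ vectors $u_0,\dots,u_B$ live in $\Rset^B$, hence are linearly dependent: there is a nonzero $\beta=(\beta_0,\dots,\beta_B)$ with $\sum_i\beta_iu_i=0$. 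Reading off the last coordinate gives $\sum_i\beta_i=0$, and pairing the relation with $\theta$ gives $\sum_i\beta_i\langle\theta,u_i\rangle=0$ for every $\theta$. Consequently the weighted sum of slacks $\sum_i\beta_is_i=\sum_i\beta_i\tau_i$ equals a single constant $C$ that does not depend on which halfspace (which $\theta$) is chosen.

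Finally I would split the indices by the sign of $\beta_i$ into $P=\{i:\beta_i>0\}$ and $N=\{i:\beta_i<0\}$; since $\beta\neq0$ and $\sum_i\beta_i=0$, both sets are nonempty. If shattering lets me realize the subset $P$ (i.e.\ include exactly the points indexed by $P$), then $s_i>0$ for $i\in P$ and $s_i\le0$ for $i\in N$, which forces $C>0$; realizing instead the subset $N$ forces $C<0$. As $C$ is one fixed number, this is a contradiction, so no $B+1$ points can be shattered and the asserted bound $\le B$ follows. The main obstacle is conceptual rather than computational: one must recognize that the hypothesis $a_B>0$ is precisely what permits the normalization and thus the drop from $B+1$ to $B$, and one must track the open (strict) inequalities carefully, since the whole contradiction rests on the two forced inequalities on $C$ being strict and opposite.
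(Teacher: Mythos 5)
Your proof is correct, and it reaches the bound by a genuinely different route than the paper. The paper argues geometrically: assuming a set $S$ of $B+1$ points is shattered, it first shows that the projection $\pi$ onto the first $B-1$ coordinates is injective on $S$ (two points differing only in the last coordinate can never be separated both ways when $a_B>0$), then invokes Radon's theorem on $\pi(S)\subset\Rset^{B-1}$ to obtain a partition $(T_1,T_2)$ of $S$ and a point $y\in\conv{\pi(T_1)}\cap\conv{\pi(T_2)}$, lifts $y$ to $(y,z)\in\conv{T_1}$ and $(y,z')\in\conv{T_2}$, and extracts the contradictory inequalities $z<z'$ and $z'<z$ from the two halfspaces cutting out $T_1$ and $T_2$ respectively. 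Your argument is an algebraic dual of this: the linear dependence $\sum_i\beta_i u_i=0$ with $\sum_i\beta_i=0$ is precisely the standard proof of Radon's theorem, your sign partition $(P,N)$ is the Radon partition, and your invariant $C=\sum_i\beta_i\tau_i$ equals $\sigma(z'-z)$ with $\sigma=\sum_{i\in P}\beta_i$, so the two forced signs of $C$ are literally the paper's two inequalities between $z$ and $z'$. What your version buys: it is self-contained (no citation of Radon), the role of $a_B>0$ is made completely explicit through the normalization $a_B=1$, and the degenerate case of two points sharing a projection needs no separate treatment --- if $u_i=u_j$ the dependence argument applies verbatim, whereas the paper must dispose of that case before Radon's theorem even applies. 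What the paper's version buys: by phrasing the argument in terms of convex hulls and separating halfspaces it sets up exactly the language reused in Corollary~\ref{cor:hab}, and it keeps the proof short by outsourcing the combinatorial core to a known theorem.
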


 Below, the convex hull of a set $A$ is denoted by $\conv{A}$.

\begin{proof}Let $\C$ be $\{ H_{a,c} \midc 
 a=\transpose{(a_1,\ldots, a_B)}\in\Rset^B, a_B>0, c\in\Rset\}$. 
Assume $\VC{\C}>B$. Then $\C$ shatters
 some set $S\subset
 \Rset^B$ such that $|S|=B+1$.  

If there are $x=(u,x_B), y=(u,y_B)\in S$ such that $x_B<y_B$, then for
any $a\in \Rset^B$ with the last component nonnegative and for any
$c\in\Rset$ we have $\ell_{a,c}(x)<\ell_{a,c}(y)$, and thus $x\in
H_{a,c}=\{x\in\Rset^B \midc  \ell_{a,c}(x)<0\}$ whenever $y\in
H_{a,c}\enspace$.  This contradicts the assumption ``$\C$ shatters
$S$.'' Therefore, for the canonical projection $\pi :\Rset^B\to
\Rset^{B-1}\ ;\ (x,z)\mapsto x$, we have $|\pi(S)|=B+1$.

 By applying Radon's theorem~\footnote{Any set of $(d+2)$ points in $\Rset^d$ can be
 partitioned into two disjoint sets whose convex hulls
 intersect.}~\cite{MR1899299} to the set $\pi(S)\subset \Rset^{B-1}$, there is a partition $(T_1, T_2)$ of $S$
 such that we can take 
 $y$ from 
 $\conv{\pi(T_1)}\cap\conv{\pi(T_2)}$.  Then we see that there are $z,z'\in \Rset$
 such that $(y,z)\in \conv{T_1}$ and $(y,z')\in\conv{T_2}$. 
Because $\C$ shatters $S$, there are some $a\in \Rset^B$ and some
 $c\in\Rset$ such that the last component $a_B$ of $a$ is nonnegative and a
 halfspace $H_{a,c}\in \C$ cuts $T_1$ out of $S$. Thus, we have
$\ell_{a,c}(x) < 0$ for all $x\in \conv{T_1}$ while $\ell_{a,c}(x)\ge 0$ for
 all $x\in \conv{T_2}$ where $T_2=S\setminus T_1$.
Therefore
 $\ell_{a,c}(y,z)<\ell_{a,c}(y,z')$ and $a_B>0$, we have
 $z'>z$. 
On the other hand, some member $H_{a',c'}\in \C$ cuts $T_2$ out of $S$.
By a similar reasoning, 
 we have $z>z'$, which is a contradiction. \qed
\end{proof} 

\begin{corollary}\label{cor:hab}If $A\subset\Rset^B\setminus \{\origin\}$
and $\VC{\{H_{a,c}\}_{a\in A,c\in\Rset}}>B$, then  $\origin\in{\conv{A}}$.
\end{corollary}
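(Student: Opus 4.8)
The plan is to prove the contrapositive: assuming $\origin\notin\conv{A}$, I will show $\VC{\{H_{a,c}\}_{a\in A,c\in\Rset}}\le B$, which contradicts the hypothesis. The whole argument reduces the situation to Lemma~\ref{lem:hab} by an invertible linear change of the ambient coordinates, so that the parameters of the given halfspaces are brought into the region handled there.

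First I would invoke a separation theorem. Since $\conv{A}$ is a convex set not containing the origin, the origin lies outside its interior, so there is a nonzero $w\in\Rset^B$ with $\transpose{w}a\ge 0$ for every $a\in A$ (after replacing $w$ by $-w$ if necessary). I expect strict inequality to be unavailable in general: when $\origin$ lies on the boundary of $\conv{A}$, some $a\in A$ may satisfy $\transpose{w}a=0$, and coping with this merely \emph{weak} separation is the main obstacle of the proof.

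Next I would transport the whole class by a linear map. Choose an invertible $g\colon\Rset^B\to\Rset^B$ with $g^{-1}e_B=w$ (extend $w$ to a basis). For invertible $g$ one checks $g(H_{a,c})=H_{\transpose{(g^{-1})}a,\,c}$, and since the shattering relation is preserved by the induced bijection on subsets, VC dimension is invariant under $g$; thus $\{H_{a,c}\}_{a\in A,c\in\Rset}$ has the same VC dimension as $\{H_{a',c}\midc a'=\transpose{(g^{-1})}a,\, a\in A,\, c\in\Rset\}$. The last coordinate of $a'$ is $\transpose{e_B}\transpose{(g^{-1})}a=\transpose{(g^{-1}e_B)}a=\transpose{w}a\ge 0$, and $a'\ne\origin$ because $a\ne\origin$ and $\transpose{(g^{-1})}$ is invertible. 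Hence the transported class is contained in $\{H_{a',c}\midc a'\ne\origin,\, a'_B\ge 0,\, c\in\Rset\}$.

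Finally I would apply Lemma~\ref{lem:hab}, the only delicate point being that I need it under the nonstrict condition $a'_B\ge 0$ rather than $a'_B>0$. I expect its proof to go through essentially verbatim. In the first step the canonical projection $\pi$ remains injective on any shattered set, since $a'_B\ge 0$ already forces $\ell_{a',c}(x)\le\ell_{a',c}(y)$ whenever the last coordinates satisfy $x_B<y_B$, so no two points of a shattered set can share their first $B-1$ coordinates. In the Radon step, the strict inequality $\ell_{a',c}(y,z)<\ell_{a',c}(y,z')$ together with $a'_B\ge 0$ still forces $z'>z$ (indeed it forces $a'_B>0$ automatically), and the symmetric cut of the complementary block forces $z>z'$, a contradiction. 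Therefore the transported class, and hence the original one, has VC dimension at most $B$, completing the contrapositive and the corollary.
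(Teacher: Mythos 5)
Your argument is correct, but it resolves the central difficulty by a genuinely different maneuver than the paper, and the comparison is instructive. Both proofs share the same skeleton: separate $\conv{A}$ from $\origin$, make the separating direction the last coordinate axis by an invertible linear map (under which, as you check, VC dimension is invariant), and feed the result into Lemma~\ref{lem:hab}. They diverge exactly at the point you flag as the main obstacle, namely that only \emph{weak} separation ($\transpose{w}a\ge 0$) is available in general. The paper sidesteps this entirely by a finitary reduction: if $\VC{\{H_{a,c}\}_{a\in A,c\in\Rset}}>B$, then a shattered set of size $B+1$ needs only $2^{B+1}$ cutting halfspaces, hence only finitely many parameter vectors, so already $\VC{\{H_{a,c}\}_{a\in A',c\in\Rset}}>B$ for some \emph{finite} $A'\subset A$; since $\conv{A'}$ is then compact and misses $\origin$, \emph{strict} separation holds, every transported parameter has last coordinate $a_B>0$, and Lemma~\ref{lem:hab} applies verbatim. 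You instead keep the whole (possibly infinite) set $A$, accept the nonstrict inequality $a_B\ge 0$, and reprove Lemma~\ref{lem:hab} under that weaker hypothesis; your check is sound: with $a_B\ge 0$ the projection step still gives $\ell_{a,c}(x)\le\ell_{a,c}(y)$, which suffices for injectivity of $\pi$ on a shattered set, and in the Radon step the inequality $\ell_{a,c}(y,z)<0\le\ell_{a,c}(y,z')$ forces $a_B>0$ a posteriori, so the two complementary cuts yield the contradictory orderings $z'>z$ and $z>z'$ exactly as before. What each approach buys: the paper's finite-subset trick lets it use Lemma~\ref{lem:hab} as a black box, and is the standard move whenever a needed property (here compactness, hence strict separation) holds only for finite subfamilies; your route avoids that reduction at the cost of reopening the lemma's proof, and in exchange records the slightly stronger fact that Lemma~\ref{lem:hab} remains true with $a_B>0$ relaxed to $a_B\ge 0$ (and $a\ne\origin$). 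One small point of rigor: your weak separation is best justified by the finite-dimensional separation theorem for two disjoint convex sets, applied to $\{\origin\}$ and $\conv{A}$, since the phrase ``the origin lies outside its interior'' needs a supporting-hyperplane argument that also covers the case where $\conv{A}$ has empty interior; the conclusion you draw is correct.
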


\begin{proof}Let $\origin\not\in\conv A$.
Then for every
 finite subset $A'$ of $A$, $\origin\notin\conv{ A'}$ and there is a
 hyperplane $J$ through $\origin$ such that $\conv{ A'}$ is
 contained in one of the two open halfspaces determined by $J$. So
there is  a new
 rectangular coordinate system such that the origin point is the same as
 the older rectangular coordinate system,  one of the new coordinate
 axes is normal to $J$, and any $a\in A'$ is represented as $(a_1,\ldots,
 a_B)$ with $a_B>0$. 
So
 $\VC{\{H_{a,c}\}_{a\in A',c\in\Rset}}\le B$  by Lemma~\ref{lem:hab},
and thus $\VC{\{H_{a,c}\}_{a\in
 A,c\in\Rset}}\le B$.  \qed
\end{proof}

The proof of Theorem~\ref{main} is as follows:
By
Lemma~\ref{thm:lowerbound}, we have only to establish that
the class of $d$-dimensional ellipsoids has \textsc{vc} dimension less
 than or equal to 
$B:=(d^2+3d)/2$. Assume otherwise. For $a= \transpose{(a_1,\ldots,
a_B)}\in\Rset^B$ and $x=\transpose{(x_1,\ldots,x_d)}$,
define a quadratic form $q_a(x)$ and a quadratic polynomial $p_a(x)$ by
\begin{align*}
q_a(x)&:=a_1 x_1^2 +\cdots + a_d x_d^2 + a_{d+1} x_1 x_2 + \cdots +
 a_{B-d} x_{d-1} x_d,\\
p_a(x)&:=q_a(x)+a_{B-d+1}x_1 + \cdots + a_B x_d.
\end{align*}
Let
 $A$ be the set of $a \in \Rset^B$ such that $q_a$ is
positive definite. 
Obviously, $A$ is convex and $\origin \notin A$. 
Then, our assumption implies $\VC{\{H_{a,c}\}_{a\in A,c\in\Rset}}>B$, since for any ellipsoid $E$, there exists 
$a \in A$ and $c \in \Rset$ such that $E=\{ x \in \Rset^d \midc 
 p_a(x)  <-c \}$.
Hence Corollary~\ref{cor:hab} shows that $\origin \in \conv{A}=A$, which
is a contradiction. \qed

\section{A lower bound of VC dimension of
 GMMs}\label{sec:gmm}

For a positive integer $N$ and a class $\P$ of probability density
functions, let $(\P)_N$ be the class of probability density functions
$p_1f_1 + \cdots + p_N f_N$ such that $f_1,\ldots, f_N\in
\P$, $p_i\ge0$ and $p_1+\cdots +p_N=1$.
For $X\subset\Rset^d$ and $t\in\Rset^d$, put $X+t:=\{x+t \midc  x\in X\}$.
The Euclidean norm of a vector $x$ is denoted by $\norm{x}$.
Let $\diam X=\sup\{ \norm{x-x'} \midc  x,x'\in X\}$.

\begin{lemma}\label{lem:u}If a class $\P$ of probability density functions on
 $\Rset^d$ satisfies
\begin{enumerate}
\item for all  $f(x)\in \P$ and $t\in\Rset^d$ we
 have $f(x+t)\in \P$; and
\item  for any $\varepsilon>0$ there exists $a>0$ such
 that $f(x)<\varepsilon$ whenever $\norm{x}>a$,
\end{enumerate}
 then  $\VC{\c{(\P)_N}}\ge N\times \VC{\c{\P}}$.
\end{lemma}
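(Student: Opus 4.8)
The plan is to take $N$ widely separated translated copies of a set shattered by $\c{\P}$ and to show that their union, of size $N\cdot\VC{\c{\P}}$, is shattered by $\c{(\P)_N}$. Write $V=\VC{\c{\P}}$ and fix a set $S=\{s_1,\ldots,s_V\}\subset\Rset^d$ shattered by $\c{\P}$. For each subset $Z\subseteq S$ choose $f_Z\in\P$ and a threshold $s_Z>0$ with $\{x\midc f_Z(x)>s_Z\}\cap S=Z$; there are only finitely many such $Z$. By nudging each $s_Z$ upward by an amount smaller than the (positive) gap $\min_{s_i\in Z}\bigl(f_Z(s_i)-s_Z\bigr)$, I may assume the separation is strict, i.e. $f_Z(s_i)>s_Z$ for $s_i\in Z$ and $f_Z(s_i)<s_Z$ for $s_i\notin Z$. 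Since $S$ and the collection $\{Z\}$ are finite, the quantity $\delta:=\min_{Z,i}|f_Z(s_i)-s_Z|$ is strictly positive.

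Next I would introduce translation vectors $t_1,\ldots,t_N\in\Rset^d$, to be fixed large, and set $T:=\bigcup_{k=1}^N(S+t_k)$. For sufficiently separated $t_k$ the clusters $S+t_k$ are pairwise disjoint, so $|T|=NV$. Given any target $Y\subseteq T$, write $Y\cap(S+t_k)=Z_k+t_k$ with $Z_k\subseteq S$. Using hypothesis (1), the translate $f_{Z_k}(\cdot-t_k)$ lies in $\P$ and satisfies $\{x\midc f_{Z_k}(x-t_k)>s_{Z_k}\}\cap(S+t_k)=Z_k+t_k$, so each cluster can be cut correctly in isolation. The difficulty is that different clusters come with different thresholds $s_{Z_k}$, whereas a single member of $\c{(\P)_N}$ offers only one global threshold.

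To reconcile the thresholds I would exploit the mixture weights: put $s:=\bigl(\sum_{k=1}^N 1/s_{Z_k}\bigr)^{-1}>0$ and $p_k:=s/s_{Z_k}$, so that $p_k\ge0$, $\sum_k p_k=1$, and $g:=\sum_{k=1}^N p_k\,f_{Z_k}(\cdot-t_k)\in(\P)_N$. For $x=s_i+t_m\in S+t_m$ the mixture splits as $g(x)=p_m f_{Z_m}(s_i)+\sum_{k\ne m}p_k f_{Z_k}(s_i+t_m-t_k)$. The leading term satisfies $p_m f_{Z_m}(s_i)>s\iff f_{Z_m}(s_i)>s_{Z_m}\iff s_i\in Z_m$, and by the strict separation its distance from $s$ is at least $p_{\min}\delta$, where $p_{\min}>0$ is a uniform lower bound for the weights (only finitely many tuples $(Z_1,\ldots,Z_N)$, hence finitely many weight systems, occur). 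The remaining sum is nonnegative, so it can only endanger the points with $s_i\notin Z_m$; by hypothesis (2) applied to the finitely many $f_Z$, I can make it uniformly smaller than $p_{\min}\delta$ by taking the $t_k$ so far apart that $\norm{s_i+t_m-t_k}$ exceeds the relevant decay radius for every $i$ and every $k\ne m$. Then $g(x)>s$ holds exactly when $s_i\in Z_m$, i.e. $\{x\midc g(x)>s\}\cap T=Y$.

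Since $Y\subseteq T$ was arbitrary and $g\in(\P)_N$, $s>0$, the set $T$ of cardinality $NV$ is shattered by $\c{(\P)_N}$, giving $\VC{\c{(\P)_N}}\ge NV=N\cdot\VC{\c{\P}}$. I expect the main obstacle to be exactly this threshold-reconciliation step together with the need for uniform control: one must check that a single spacing of the clusters works simultaneously for all $2^{NV}$ targets $Y$, which is what forces me to extract the uniform constants $\delta$ and $p_{\min}$ from the finiteness of $S$ and of the subset collection before sending the $t_k$ to infinity.
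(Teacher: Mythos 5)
Your proof is correct and follows essentially the same route as the paper's: translate $N$ copies of a shattered set far apart, choose mixture weights proportional to the reciprocals of the individual thresholds so that a single global threshold $s=\bigl(\sum_k 1/s_{Z_k}\bigr)^{-1}$ decides every cluster correctly, and use the decay hypothesis plus uniform margins to suppress cross-cluster contamination. Indeed, your weights $p_k=(1/s_{Z_k})/\sum_j(1/s_{Z_j})$ and threshold $s$ coincide exactly with the paper's $p_{Y_k}=\exp(r_{Y_k})/\sum_j\exp(r_{Y_j})$ and $\exp(-r_V)$ under the substitution $s_{Z_k}=\exp(-r_{Y_k})$; the only difference is that you work multiplicatively while the paper works in logarithmic scale.
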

\begin{proof}
Suppose $X\subset \Rset^d$ is shattered by $\c{\P}$. Then for each
$Y\subseteq X$ there exist
$g_Y\in\P$ and $r_Y\in\Rset$ such that
\begin{align}
Y = X\cap D_Y,\quad
D_Y=\{x\in\Rset^d \midc  g_Y(x)>e^{-r_Y}\}.\label{eq1}
\end{align}
When there is $z\in X\setminus Y$ such that $-\log
 g_Y(z)$ is equal to $r_Y$, we take a smaller $r_Y > \max\left\{ -\log g_Y(x) \midc  x\in
Y \right\}$ with the condition \eqref{eq1} kept. Then
\begin{align}
q:=\min\bigl\{  -r_{Y_j}-\log g_{Y_j}(z) \midc  z\in
 X\setminus Y_j,\ 
 Y_j\subsetneq X,\ 1\le j\le N\bigr\}  \label{q}
\end{align}
is well-defined and positive. Let $\delta>0$  be smaller than this and
all of $r_{Y_j}+\log g_{Y_j}(x)$ where $x\in
 Y_j\subseteq X$ and $1\le j\le N$.

By the assumptions (1) and (2), we
can prove that for any $j\in \{1,\ldots, N\}$, for any $\varepsilon>0$,
 for any $t_1,\ldots, t_N\in\Rset^d$
with $\norm{t_i-t_j}> \diam X$ ($i\ne j$), we have (i) 
$U:=\bigcup_{i=1}^N(X+t_i)$ has cardinality $N|X|$, and (ii) for any $x\in X$, $Y_1\subseteq
X,\ldots, Y_N\subseteq X$, for $p_{Y_i} := \exp(r_{Y_i}) / \sum_{k=1}^N
\exp(r_{Y_k})$ ($1\le i\le N$), 
\begin{align*}%
\sum_{i\ne j, 1\le i\le N} p_{Y_i}
g_{Y_i}(x-t_i)\ <\ \varepsilon
\ <\ p_{Y_j} g_{Y_j}(x-t_j) \enspace.
\end{align*}
Then the sum of the leftmost term and the rightmost term is, write
 $f(x)$,  a member of $\left(\c{\P}\right)_N$, and satisfies
$ 0< \log f(x+t_j) -  \log \left(p_{Y_j} g_{Y_j}(x)\right)
<{\varepsilon}/\left({p_{Y_j}g_{Y_j}(x)}\right)$,
since $\log (1 + u)\le u$ for any $u>0$. Because $U$ is the disjoint
union of $X+t_i$ over $1\le i \le N$, every subset $V$ of $U$ has a
 unique sequence $(Y_i)_{i=1}^N$ of subsets of $X$ such that $V=\bigcup_{i=1}^N (Y_i + t_i)$.
 So, we can define
$r_V:=\log\sum_{i=1}^N \exp (r_{Y_i})$, and $\log p_{Y_i}= r_{Y_i} - r_V$. Hence, there exist $t_1,\ldots,
t_N\in\Rset^d$ such that $\norm{t_i-t_j}> \mathrm{diam}(X)$ ($i\ne j$) and
for any $x\in X, Y_1\subseteq X,\ldots,
Y_N\subseteq X, j\in\{1,\ldots,N\}$, we have 
\begin{align}
0< \left (r_V+\log f(x+t_j)\right) -\left (r_{Y_j}+\log g_{Y_j}(x)\right)<\delta. \label{eq5}
\end{align}

Define $C_V:=\{x\in\Rset^d \midc  f(x)> \exp(-r_V)\}$, and suppose $x\in X$ and 
$j\in\{1,\ldots,N\}$. Assume $x\in Y_j$. By \eqref{eq1}, $0<r_{Y_j}+\log
g_{Y_j}(x)$. By \eqref{eq5}, $0<r_{Y_j}+\log
g_{Y_j}(x) < r_V + \log f(x+t_j)$. Therefore
$x+t_j\in C_V$. On the other hand, assume  $x\in X\setminus Y_j$.
 By \eqref{eq5} and \eqref{q}, $r_V + \log
f(x+t_j)  < r_{Y_j} + \log g_{Y_j}(x) + \delta<
r_{Y_j} + \log g_{Y_j}(x) + q\le 0$.  Thus $x+t_j\notin C_V$. To sum up, for any $x\in X$ and
any $j\in \{1,\ldots, N\}$, we have 
$x\in Y_j\iff x+t_j\in C_V$. 
Hence $U\cap C_V=V$. Thus $U$ is shattered by
$\left(\c{\P}\right)_N$. \qed
\end{proof} 

By Lemma~\ref{thm:lowerbound} and Lemma~\ref{lem:u}, we have:
\begin{corollary} The \textsc{vc} dimension of $(N,d)$-\textsc{gmm}s is greater than or equal to
$N(d^2+3d)/2$. In other words, for
the class $(\G)_N$ of $(N,d)$-\textsc{gmm}s, the
 class $\c{(\G)_N}$ has the \textsc{vc} dimension greater than or equal to
$N(d^2+3d)/2$.\end{corollary}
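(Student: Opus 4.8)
The plan is to obtain the bound as an immediate consequence of the two preceding lemmas, applied to the Gaussian family $\P=\G$. First I would recall from Section~\ref{sec:intro} that $\c{\G}$ is exactly the class of $d$-dimensional ellipsoids, so that Lemma~\ref{thm:lowerbound} already gives $\VC{\c{\G}}\ge (d^2+3d)/2$. It then suffices to feed this into Lemma~\ref{lem:u}, which asserts $\VC{\c{(\P)_N}}\ge N\times\VC{\c{\P}}$ whenever $\P$ meets its two hypotheses; specializing to $\P=\G$ (so that $\c{(\P)_N}=\c{(\G)_N}$) and chaining the two inequalities yields $\VC{\c{(\G)_N}}\ge N\times(d^2+3d)/2$, which is the claim.

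The only real work is to verify that the Gaussian class $\G$ satisfies the two conditions of Lemma~\ref{lem:u}. For condition (1), I would observe that translating the argument of a Gaussian density merely shifts its mean: if $f$ is the density with mean $\mu$ and covariance $\Sigma$, then $f(x+t)$ is the density with mean $\mu-t$ and the same covariance $\Sigma$, hence again a member of $\G$. For condition (2), I would use the explicit exponential form: for fixed mean and covariance the density $f(x)$ is bounded above by $(2\pi)^{-d/2}|\det\Sigma|^{-1/2}$ and decays like $\exp\bigl(-\transpose{(x-\mu)}\Sigma^{-1}(x-\mu)/2\bigr)$, which tends to $0$ as $\norm{x}\to\infty$ because $\Sigma^{-1}$ is positive definite; thus for any $\varepsilon>0$ there is an $a>0$ with $f(x)<\varepsilon$ for $\norm{x}>a$.

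With both hypotheses confirmed, the corollary follows by composing Lemma~\ref{lem:u} (instantiated at $\P=\G$) with the ellipsoid lower bound of Lemma~\ref{thm:lowerbound}. I do not expect any substantive obstacle here: the content has been isolated in Lemma~\ref{lem:u}, and the remaining task is the routine check of translation invariance and decay for Gaussian densities. If anything requires care, it is noting that in the proof of Lemma~\ref{lem:u} condition (2) is only ever invoked for the finitely many densities $g_{Y_j}$ attached to a single shattered set, so the per-density decay established above already suffices and no uniformity over the whole family $\G$ is needed.
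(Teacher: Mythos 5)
Your proposal is correct and follows exactly the paper's route: the paper proves this corollary simply by citing Lemma~\ref{thm:lowerbound} and Lemma~\ref{lem:u}, which is precisely your chain of inequalities with $\P=\G$. The only difference is that you explicitly verify the translation-invariance and decay hypotheses of Lemma~\ref{lem:u} for Gaussian densities (and correctly note that only per-density decay for the finitely many $g_{Y_j}$ is needed), a routine check the paper leaves implicit.
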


\section{Conclusion} We can easily obtain an asymptotically tight estimate of the class of
$d$-dimensional ellipsoids through the combination of a naive
linearization argument~\cite{MR1899299} and an approximation argument of ``affine
subspaces''~(\emph{bands}~\cite{springerlink:10.1007/s00454-009-9236-5},
more precisely) by  ellipsoids. However, we in Section~\ref{sec:ellipsoids} have provided the \emph{exact} value of the \textsc{vc}
dimension, by combining a linearization argument~\cite{MR1899299,MR602047} with an argument about convex
bodies.
Our argument seems useful to establish the \textsc{vc} dimension of the
class of bounded sets $\{x\in\Rset^d \midc   \mbox{$p(x)>0$}\}$ such that
$p$ is any real polynomial with bounded degree. 

%Next, we recall \emph{principal component
%analysis}~\cite{MR1802993}~(\textsc{pca} for short), a typical
%statistical inference. According to
%\cite{springerlink:10.1007/s00454-009-9236-5}, if a \textsc{pca} halves
%the dimensionality of data, then it induces a geometric class whose
%\textsc{vc} dimension has the same order as $\GMM{1}{d}$. Then we show
%any set shattered by the latter class is shattered also by
%$\GMM{1}{d}$. By these results, we compare the essential difficulty of
%the \textsc{mle} for $(N,1)$-\textsc{gmm} with that of a \textsc{pca}.
%In Section~\ref{sec:gmm}, the lower bound \eqref{goal} of the
%\textsc{vc} dimension of $\GMM{N}{d}$ is proved.

\section*{Acknowledgements}The first author is partially supported by Grant-in-Aid for Scientific
Research (C) (21540105) of the Ministry of Education, Culture, Sports,
Science and Technology (MEXT). The second author is Supported by
Grant-in-Aid for JSPS Fellows.

\end{document}